\newcommand{\R}{\mathbb{R}}
\newtheorem{theorem}{Theorem}[section]
\newtheorem{lemma}[theorem]{Lemma}
\newtheorem{proposition}[theorem]{Proposition}
\newtheorem{corollary}[theorem]{Corollary}
\newtheorem{definition}[theorem]{Definition}
\begin{document}

\title{Counting joints in vector spaces over arbitrary fields}
\author{Anthony Carbery and Marina Iliopoulou}
\address{School of Mathematics and Maxwell Institute for Mathematical Sciences, University of Edinburgh, 
Edinburgh, EH9 3JZ, UK}
\address{School of Mathematics, University of Birmingham, Birmingham, B15 2TT, UK}
\email{\href{mailto:A.Carbery@ed.ac.uk}{A.Carbery@ed.ac.uk}, \href{mailto:M.Iliopoulou@bham.ac.uk}
{M.Iliopoulou@bham.ac.uk}}
\maketitle


\section{Introduction}

If $\mathbb{F}$ is an arbitrary field and $n \geq 2$, a point $x \in \mathbb{F}^n$ is {\em a joint formed 
by a finite collection $\mathfrak{L}$ of lines in} $\mathbb{F}^n$ if there exist at least $n$ lines in 
$\mathfrak{L}$ passing through $x$ whose directions span $\mathbb{F}^n$. The problems we consider below are
trivial in the case $n=2$, so from now on we shall assume $n \geq 3$.

The main problem is to bound the number of joints by the correct power of the number of lines forming them.
When $\mathbb{F}= \mathbb{R}$ the result of Quilodr\'{a}n \cite{MR2594983} and Kaplan, Sharir and Shustin \cite{MR2728035}
states that if $\mathfrak{L}$ is a collection of $L$ lines in $\R^n$, 
and $J$ is the set of joints formed by $\mathfrak{L}$, then 

\begin{equation} |J| \leq C_n L^{n/(n-1)} \label{eq:basic} \end{equation} 

where $C_n$ is a constant depending only on the dimension $n$. This is the sharp estimate as is seen by letting
$\mathfrak{L}_j$ be the collection of $M^{n-1}$ lines parallel to the $j$'th standard basis vector $e_j$ passing 
through the points $(k_1, \dots, k_{j-1}, 0, k_{j+1}, \dots , k_n)$ for $k_i \in \{1, \dots , M \}$, and setting 
$\mathfrak{L} = \cup_{j=1}^n \mathfrak{L}_j$.

Prior to these works there were numerous partial results, see 
\cite{Chazelle_Edelsbrunner_Guibas_Pollack_Seidel_Sharir_Snoeyink_1992}, \cite{MR1280600},\cite{MR2047237},\cite{MR2121298}), 
\cite{MR2275834}), \cite{Guth_Katz_2008} and \cite{MR2763049}. 

Kaplan, Sharir and Shustin and Quilodr\'an
exploited properties of polynomials in $\R[x_1, \dots ,x_n]$ which do not hold for polynomials 
with coefficients in general fields. In particular, one can derive less information about a polynomial from its 
gradient in the case of a field of non-zero characteristic than in the case of a field of zero 
characteristic. (For example, in a field of characteristic $p$, the two polynomials $x^p$ and $0$ both 
have zero derivative.) Nevertheless, it has become folklore that the joints estimate \eqref{eq:basic} continues to hold
in arbitrary fields -- see for example the last answer to this 
\href{http://mathoverflow.net/questions/90645/on-the-joints-problem-in-finite-fields/90651#90651}{question} 
in \url{www.mathoverflow.net}. The purpose of this note is to give our argument for this result, which also 
appears in the second author's PhD thesis, \cite{Thesis}.
\footnote{We thank Terry Tao for pointing out, subsequent to the initial posting of 
this note, the articles \cite{Dv} and \cite{MR320022} which also contain treatments of this result.}

\begin{theorem}\label{carberyjoints}  Let $\mathbb{F}$ be any field and $n\geq 3$. 
Let $\mathfrak{L}$ be a finite collection of $L$ lines in $\mathbb{F}^n$, and $J$ the set of joints 
formed by $\mathfrak{L}$. Then

\begin{displaymath} |J| \leq C_n   L^{n/(n-1)},\end{displaymath}

where $C_n$ is a constant depending only on $n$.
\end{theorem}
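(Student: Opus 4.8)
The plan is to argue by contradiction using the polynomial method, with two adjustments needed for general fields: passing to the algebraic closure $\overline{\mathbb{F}}$ (which is perfect, sidestepping both the finite-field and the non-perfectness pathologies mentioned in the introduction), and a pruning step ensuring that the relevant gradient vanishes at \emph{every} surviving joint, not merely at most of them. After disposing of the trivial case $J=\emptyset$, note that the lines of $\mathfrak{L}$, regarded in $\overline{\mathbb{F}}^n$, still have $n$ of them through each joint with $\overline{\mathbb{F}}$-spanning directions (a determinant that is nonzero over $\mathbb{F}$ stays nonzero over $\overline{\mathbb{F}}$), and only the finitely many points of $J$ on those lines will be used, so nothing is lost by working over $K:=\overline{\mathbb{F}}$. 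Suppose, for contradiction, that $|J|>C_n L^{n/(n-1)}$ for a constant $C_n$ depending only on $n$, to be fixed at the end.

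Put $t:=\lfloor |J|/(2L)\rfloor$ and run the following pruning: as long as some line of $\mathfrak{L}$ contains at most $t$ of the current joints, delete that line and delete all joints lying on it. Each step removes one line and at most $t$ joints, and there are at most $L$ steps, so the process ends with a subcollection $\mathfrak{L}'$ and a set $J'$ satisfying $|J'|\ge|J|-Lt\ge|J|/2$, with every line of $\mathfrak{L}'$ meeting $J'$ in more than $t$ points; moreover every point of $J'$ is still a joint formed by $\mathfrak{L}'$, since a deleted line cannot pass through a surviving joint and hence cannot have been one of its spanning lines.

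Now choose a nonzero $P\in K[x_1,\dots,x_n]$ of \emph{minimal degree} $d$ vanishing on $J'$. A dimension count ($\binom{d+n}{n}>|J'|\ge\binom{d+n-1}{n}$, the latter forced by minimality) gives $d\le n|J'|^{1/n}\le n|J|^{1/n}$, and $d\ge 1$ since a nonzero constant vanishes nowhere. For $C_n$ large enough one has $t\ge|J|/(2L)-1>n|J|^{1/n}\ge d$, so every line of $\mathfrak{L}'$ contains more than $d$ points of $J'$; restricting $P$ to such a line yields a univariate polynomial of degree $\le d$ with more than $d$ roots, hence $P$ vanishes identically on every line of $\mathfrak{L}'$. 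Fix a joint $w\in J'$; through it pass $n$ lines of $\mathfrak{L}'$ with directions $v_1,\dots,v_n$ spanning $K^n$, and since $s\mapsto P(w+sv_i)$ is the zero polynomial for each $i$, reading off the coefficient of $s$ gives $v_i\cdot\nabla P(w)=0$ (formal partial derivatives), so $\nabla P(w)=0$. As $w\in J'$ was arbitrary, each of $\partial_1 P,\dots,\partial_n P$ --- of degree at most $d-1$ --- vanishes on all of $J'$, and minimality of $d$ forces each to be the zero polynomial.

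It remains to draw the contradiction from $\partial_j P\equiv 0$ for all $j$. If $\operatorname{char}\mathbb{F}=0$ this makes $P$ constant, contradicting $d\ge1$. If $\operatorname{char}\mathbb{F}=p>0$, every monomial of $P$ has all exponents divisible by $p$, so, using that $K=\overline{\mathbb{F}}$ is perfect, $P=Q^p$ for a nonzero $Q\in K[x_1,\dots,x_n]$ with $\deg Q=d/p<d$; since $Q(x)^p=P(x)$ vanishes on $J'$, so does $Q$, contradicting minimality of $d$. Tracking the inequalities shows $C_n=(2n+2)^{n/(n-1)}$ suffices (any larger constant works too). The step I expect to be the real obstacle is precisely this last one: in positive characteristic the gradient does not determine the polynomial, which is exactly why one must work over a perfect field and take a $p$-th root. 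A secondary point requiring care is that the pruning is arranged so that $\nabla P$ vanishes on \emph{all} of $J'$ --- without that, the minimality argument has nothing to contradict, since the gradient only vanishes automatically at joints all of whose lines are rich.
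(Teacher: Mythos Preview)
Your proof is correct and follows essentially the same route as the paper's: pass to the algebraic closure, take a nonzero polynomial of minimal degree vanishing on the (pruned) joint set, show its formal gradient vanishes at every joint and hence identically, and finish via the $p$-th root trick in positive characteristic. The only cosmetic difference is that the paper isolates the polynomial step as a standalone proposition (if every line meets $K$ in at least $m$ points then $|K|\gtrsim_n m^n$) and then deduces the theorem by Quilodr\'an's iterative line-removal, whereas you fold the pruning and the polynomial argument into a single contradiction.
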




It is also of interest to count the number of joints of $\mathfrak{L}$ according to multiplicities.
Indeed, for $x$ a joint of $\mathfrak{L}$ let
$$ N(x) = | \{(l_1, \dots , l_n) \in \mathfrak{L}^n \, : \, l_1, \dots , l_n \mbox{  form a joint at  } x\}|.$$
We say that $\mathfrak{L}$ is {\em generic} if whenever $n$ distinct lines of $\mathfrak{L}$ meet, they form 
a joint.  

\begin{theorem}\label{generic}
Let $\mathbb{F}$ be any field and $n\geq 3$. 
Let $\mathfrak{L}$ be a generic finite collection of $L$ lines in $\mathbb{F}^n$, 
and $J$ the set of joints formed by $\mathfrak{L}$.
Then for all $\lambda \geq 1$, 
\begin{displaymath} 
|\{x \in J \, : \, N(x) \geq \lambda \}| 
\leq C_n \frac{ L^{n/(n-1)}}{\lambda^{1/(n-1)}}
\end{displaymath}
where $C_n$ is a constant depending only on $n$.
\end{theorem}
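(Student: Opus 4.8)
The plan is to deduce Theorem \ref{generic} from Theorem \ref{carberyjoints} by a standard dyadic pigeonholing / random sub-sampling argument. First I would fix $\lambda \geq 1$ and set $J_\lambda = \{x \in J : N(x) \geq \lambda\}$; the goal is to bound $|J_\lambda|$. The key observation is that if $x \in J_\lambda$, then there are at least $\lambda$ ordered $n$-tuples of lines of $\mathfrak{L}$ forming a joint at $x$, and hence — since an $n$-tuple can only contribute if its $n$ lines are distinct and pass through $x$ — at least roughly $\lambda^{1/n}$ \emph{distinct} lines of $\mathfrak{L}$ pass through $x$ (because if $k$ distinct lines pass through $x$ then $N(x) \leq k^n$, so $k \geq N(x)^{1/n} \geq \lambda^{1/n}$). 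Using genericity this is essentially sharp in the reverse direction too, but only the lower bound $k \gtrsim \lambda^{1/n}$ is needed.

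The main step is then a random restriction: choose a subset $\mathfrak{L}' \subseteq \mathfrak{L}$ by keeping each line independently with probability $p$, where $p$ will be chosen of size comparable to $\lambda^{-1/n}$ (up to constants). For a fixed $x \in J_\lambda$, since at least $c\lambda^{1/n}$ distinct lines of $\mathfrak{L}$ pass through $x$ with directions spanning $\mathbb{F}^n$, a first-moment / greedy argument on a maximal spanning subfamily shows that with probability bounded below by an absolute constant the surviving family $\mathfrak{L}'$ still contains $n$ lines through $x$ whose directions span $\mathbb{F}^n$ — i.e. $x$ remains a joint of $\mathfrak{L}'$. (One way: split a fixed set of $\sim\lambda^{1/n}$ lines through $x$ into $n$ groups each containing a line from a fixed spanning $n$-tuple together with many others; with $p \sim \lambda^{-1/n}$ each group retains a line with probability $\gtrsim 1$, and independence across groups gives a constant lower bound overall.) Taking expectations, $\mathbb{E}|J(\mathfrak{L}')| \gtrsim |J_\lambda|$, while Theorem \ref{carberyjoints} gives $|J(\mathfrak{L}')| \leq C_n |\mathfrak{L}'|^{n/(n-1)}$ pointwise, and $\mathbb{E}|\mathfrak{L}'| = pL$; combining via Jensen (the function $t \mapsto t^{n/(n-1)}$ is convex, so one actually wants an upper bound on $\mathbb{E}|\mathfrak{L}'|^{n/(n-1)}$, which follows from concentration of the binomial or simply from $\mathbb{E}|\mathfrak{L}'|^{n/(n-1)} \leq C(\mathbb{E}|\mathfrak{L}'|)^{n/(n-1)} + C$ for binomial sums) yields $|J_\lambda| \lesssim_n (pL)^{n/(n-1)} \sim_n (\lambda^{-1/n} L)^{n/(n-1)} = L^{n/(n-1)} \lambda^{-1/(n-1)}$, as desired.

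There is a case distinction to handle at the boundary: the argument above needs $p \leq 1$, i.e. it is only meaningful when $\lambda^{1/n} \gtrsim 1$, which holds since $\lambda \geq 1$; but one should also check that when $\lambda$ is bounded (say $\lambda \leq C$ for an absolute constant) the claimed bound follows directly from Theorem \ref{carberyjoints} with a worse constant, since then $\lambda^{1/(n-1)}$ is bounded and $|J_\lambda| \leq |J| \leq C_n L^{n/(n-1)}$. Also, if $pL < 1$ the random family is typically empty and one should instead note $|J_\lambda| \leq L^n$ trivially and check this is dominated by the right-hand side in that regime, or simply absorb small $L$ into the constant.

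The step I expect to be the main obstacle is making precise and rigorous the claim that a fixed joint $x \in J_\lambda$ \emph{survives} in $\mathfrak{L}'$ with probability bounded below by an absolute constant depending only on $n$. The subtlety is that "being a joint at $x$" requires not merely many lines through $x$ but $n$ of them with \emph{spanning directions}, and one must argue that the many lines guaranteed through $x$ can be organised so that random survival still produces a spanning configuration; here genericity is what rescues us, because it means \emph{any} $n$ distinct concurrent lines of $\mathfrak{L}$ through $x$ automatically form a joint, so it suffices to guarantee that $\mathfrak{L}'$ retains at least $n$ of the $\gtrsim \lambda^{1/n}$ lines through $x$ — and the probability that a binomial$(\sim\lambda^{1/n}, \sim\lambda^{-1/n})$ variable is at least $n$ is bounded below by a constant depending only on $n$ by a routine second-moment or direct estimate. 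Once this probabilistic lemma is stated cleanly, the rest is bookkeeping with the binomial moment bound and the convexity of $t\mapsto t^{n/(n-1)}$.
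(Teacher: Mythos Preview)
Your approach is correct and matches the paper's primary proof: the paper deduces Theorem~\ref{generic} from Theorem~\ref{carberyjoints} by exactly this random sub-sampling argument, keeping each line independently with probability $\lambda^{-1/n}$ and using genericity to ensure that a joint $x$ with $N(x)\geq\lambda$ survives in $\mathfrak{L}'$ with probability bounded away from zero. (The paper also records an alternative deterministic argument in Section~\ref{alt} via an iterative line-removal procedure, but the probabilistic proof you outline is the one given first.)
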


Theorem \ref{generic} follows from Theorem \ref{carberyjoints} by a standard probabilistic 
argument which we briefly  sketch. Given $\mathfrak{L}$ and $\lambda$, we randomly choose 
a subset $\mathfrak{L}'$ of $\mathfrak{L}$, including each line with probability $\lambda^{-1/n}$. 
Then  $|\mathfrak{L}'| 
= \lambda^{-1/n} | \mathfrak{L}|$, and with probability bounded away from zero, if $N(x) > \lambda$ then
$x$ is a joint for $\mathfrak{L}'$. By Theorem \ref{carberyjoints} applied to $\mathfrak{L}'$ we obtain
$$|\{x \in J \, : \, N(x) \geq \lambda \}| \leq C_n \Bigg(\lambda^{-1/n} | \mathfrak{L}|\Bigg)^{n/(n-1)}$$
as required. For full details see \cite{Thesis}. 

The power $1/(n-1)$ of $\lambda$ occuring in Theorem \ref{generic} is optimal as a generic configuration of 
lines all passing through $0$ demonstrates. In the special case of $\mathbb{R}^3$, the stronger estimate
\begin{equation} \label{strong}
\sum_{x \in J} N(x)^{1/2} \leq C L^{3/2}
\end{equation}
has been obtained without the extra hypothesis of genericity, (see
\cite{MI} and \cite{Thesis}). The techniques used for the proof of \eqref{strong} rely on the
topology and continuous nature of euclidean space, as well as on 
algebraic geometric facts that hold only in three dimensions, which 
suggests that their direct application to the setting of different fields 
and to higher dimensions is unlikely. On the other hand, we have been informed by M\'arton Hablicsek \cite{H}
that he has been able to build on recent work of Koll\'ar \cite{Ko} to establish 
$$\sum_{x \in J} N(x)^{1/(n-1)} \leq C_n L^{n/(n-1)}$$
over quite general fields, assuming the genericity hypothesis, thus superseding Theorem \ref{generic}. 
(Koll\'ar's work relies on algebraic geometry and sheaf cohomology and so cannot be considered entirely 
elementary.)

Note the crucial use of the genericity hypothesis in the proof of Theorem \ref{generic}. 
We give an alternative argument for this result below in Section \ref{alt}
which in principle suggests an approach in the non-generic case. In this regard see the remarks in 
Subsection \ref{nongen}.


\textbf{Notation.} In what follows, any expressions of the form $A \lesssim_n B$ mean that $A=O_n(B)$, or, 
in other words, that there exists a non-negative constant $C_n$, depending only on $n$, such that 
$A \leq C_n B$. Similarly, $A \gtrsim_n B$ means that $B
  \lesssim_n A $, while $A \sim B$ means 
that $A \lesssim_n B$ and $A \gtrsim_n B$. We use $\mathbb{Z}_+$ to
denote the set of nonnegative integers.

\section{Preliminaries on polynomials}
Let $\mathbb{F}$ be a field. We emphasise that saying that a polynomial in $\mathbb{F}[x_1, \dots, x_n]$
is non-zero means that it has a non-zero coefficient. While in $\R[x_1, \dots , x_n]$ a non-zero 
polynomial cannot vanish on the whole of $\R^n$, the same does not hold in an arbitrary field setting. 
For example, if $\mathbb{F}$ is a finite field, the non-zero polynomial $x^{|\mathbb{F}|}- x$ in 
$\mathbb{F}[x]$ vanishes on the whole of $\mathbb{F}$.

The basic linear algebra lemma of Dvir \cite{MR2525780} is:


\begin{lemma} \label{dvir} Let $\mathbb{F}$ be any field. For any set $P$ of $m$ points in 
$\mathbb{F}^n$, there exists a non-zero polynomial in $\mathbb{F}[x_1,...,x_n]$, of degree 
$\lesssim_n m^{1/n}$ which vanishes at each point of $P$.
\end{lemma}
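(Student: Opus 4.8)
The plan is the standard parameter-counting (``polynomial method'') argument, which goes through verbatim over an arbitrary field. I would fix a degree parameter $d\in\mathbb{Z}_+$, to be chosen at the end, and let $V_d$ denote the $\mathbb{F}$-vector space of all polynomials in $\mathbb{F}[x_1,\dots,x_n]$ of degree at most $d$. A basis for $V_d$ over $\mathbb{F}$ is given by the monomials $x_1^{\alpha_1}\cdots x_n^{\alpha_n}$ with $\alpha_1+\cdots+\alpha_n\le d$, and counting these by stars and bars gives $\dim_{\mathbb{F}}V_d=\binom{n+d}{n}$.

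Next I would consider the evaluation map $E\colon V_d\to\mathbb{F}^m$ sending a polynomial $Q$ to the tuple $\bigl(Q(p)\bigr)_{p\in P}$ of its values at the $m$ points of $P$; this is plainly $\mathbb{F}$-linear. Hence, if $\dim_{\mathbb{F}}V_d>m$, the kernel of $E$ is nontrivial, and any nonzero element of this kernel is a polynomial with at least one nonzero coefficient that vanishes at every point of $P$, which is exactly what is wanted.

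It therefore remains to choose $d\lesssim_n m^{1/n}$ with $\binom{n+d}{n}>m$. Using the crude estimate $\binom{n+d}{n}=\prod_{j=1}^{n}\frac{d+j}{j}\ge\frac{(d+1)^n}{n!}$, it suffices that $(d+1)^n>n!\,m$, so one may take $d=\bigl\lceil(n!)^{1/n}m^{1/n}\bigr\rceil$, which satisfies $d\le C_n m^{1/n}$ for $m\ge 1$ (the case $m=0$ being trivial). This completes the argument.

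I do not expect any genuine obstacle here; the only point meriting a remark is that over a field of positive characteristic a nonzero polynomial — that is, one possessing a nonzero coefficient — may vanish identically on $\mathbb{F}^n$. This causes no difficulty, since we demand vanishing only on the finite set $P$, and the linear-algebra argument above never invokes the (generally false) principle that a polynomial is determined by its values; in particular no hypothesis that $\mathbb{F}$ be infinite is used.
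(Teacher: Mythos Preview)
Your proof is correct and follows exactly the same parameter-counting approach as the paper: both set up the linear system of vanishing conditions at the $m$ points and observe that the space of polynomials of degree at most $d$ has dimension $\sim_n d^n$, so choosing $d\sim_n m^{1/n}$ guarantees a nontrivial solution. Your version simply fills in more of the quantitative details (the binomial coefficient, the explicit choice of $d$) than the paper's terse sketch.
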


\begin{proof} 
We merely notice that if there are fewer equations (one for each point of $P$) than unknowns (the number 
of coefficients of a polynomial of degree at most $d$) then the system of equations

\begin{displaymath}
\sum_{|\alpha| \leq d} c_\alpha x^\alpha = 0, \;\;\;\; x \in P
\end{displaymath}

has a nontrivial solution $\{c_\alpha\}$. Now the number of unknowns is $\sim d^n$, so if $d \sim m^{1/n}$
we can find a polynomial of degree $d$ vanishing on $P$. 
\end{proof}




We now consider the formal (or Hasse) first-order partial derivatives of a polynomial (cf. \cite{MR2648400}).

\begin{definition} Let $\mathbb{F}$ be a field and $f \in \mathbb{F}[x_1,...,x_n]$. For $i \in \{1, \dots, n\}$, 
the \emph{$i$-th formal derivative} $f_{i}$ of $f$ is defined as the coefficient of $z_i$ in $f(x+z)$.
\end{definition}

Clearly when $\mathbb{F} = \R$ the formal derivative and the usual partial derivative agree. It is easy to 
see that the formal derivative is a linear map and that if $m(x_1, \dots ,x_n)=c_{a_1, \dots ,a_n}x_1^{a_1}\cdots x_n^{a_n}$ is a monomial in $\mathbb{F}[x_1, \dots ,x_n]$, then 
\begin{displaymath}
m_{i}(x_1, \dots ,x_n) = a_i \cdot c_{a_1, \dots ,a_n}x_1^{a_1}\cdots  x_i^{a_i-1} \cdots x_n^{a_n}
\end{displaymath}
\begin{displaymath}
\Bigg(:=\bigg(\sum_{k=1}^{a_i}c_{a_1, \dots ,a_n}\bigg)x_1^{a_1}\cdots  x_i^{a_i-1} \cdots x_n^{a_i}\Bigg)
\end{displaymath} 
(which, when $a_i = 0$, means $m_i(x_1, \dots ,x_n) = 0$). Note that, for all $i=1, \dots ,n$, 
$$m_i(x_1,...,x_n)=0\text{ if and only if }a_i \cdot c_{a_1,...,a_n}\;
\bigg(=\sum_{k=1}^{a_i} c_{a_1,...,a_n}\bigg)=0. $$

We define the formal gradient of a polynomial in $\mathbb{F}[x_1, \dots ,x_n]$ as follows:

\begin{definition} Let $\mathbb{F}$ be a field and $f \in \mathbb{F}[x_1, \dots ,x_n]$. The formal gradient of 
$f$ is the element $\nabla{f}$ of $(\mathbb{F}[x_1, \dots ,x_n])^n$, defined as 
\begin{displaymath}\nabla{f}= (f_1, \dots ,f_n).
\end{displaymath}

\end{definition}
When $n=1$ we denote the formal gradient by a prime, as ususal. From now on we refer to the formal derivatives 
and gradients merely as derivatives and gradients.

We would like to be able to derive information about a polynomial $f \in \mathbb{F}[x_1, \dots ,x_n]$ from its gradient. It would be nice to know, for example, that two polynomials in one variable with the same derivative differ by a constant. However, that is not true in general. For example, if $\mathbb{F}$ is a field of characteristic $p$, the derivative of the polynomial $x^{p}$ in $\mathbb{F}[x]$ is equal to $\big(\sum_{k=1}^{p}1\big) x^{p-1}=0$, i.e. it is the zero polynomial. On the other hand, the derivative of the zero polynomial is also the zero polynomial, but $x^p$ and 0 do not differ by a constant as polynomials.

However, the following holds:

\begin{lemma} \label{carbery3}Let $\mathbb{F}$ be a field and suppose $f \in \mathbb{F}[x_1, \dots ,x_n]$ 
satisfies $\nabla{f}=0$.

\emph{(i)} If the characteristic of $\mathbb{F}$ is zero, then $f$ is a constant polynomial.

\emph{(ii)} If the characteristic of $\mathbb{F}$ is $p$ (for some prime $p$), then $f$ is of the form
\begin{displaymath} 
f(x_1, \dots ,x_n)=\sum_{a_i \in \mathbb{Z}_+}
\beta_{a_1, \dots ,a_n} x_1^{p a_1}\cdots x_n^{p a_n}. 
\end{displaymath} 
 \emph{(iii)} If the characteristic of $\mathbb{F}$ is $p$ and $\mathbb{F}$ is algebraically closed, then
$f$ is of the form $f = g^p$ for some $g \in \mathbb{F}[x_1, \dots ,x_n]$.
\end{lemma}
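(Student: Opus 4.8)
The plan is to analyse the monomial expansion of $f$ directly using the explicit formula for the formal derivative of a monomial, and then bootstrap from case (ii) to case (iii) using the Frobenius endomorphism. Write $f = \sum_\alpha c_\alpha x^\alpha$ with $\alpha = (a_1, \dots, a_n)$. Since the formal derivative is linear and $\nabla f = 0$ means $f_i = 0$ for every $i$, and since distinct monomials $x^\alpha$ have derivatives supported on distinct monomials, we must have that for every $\alpha$ with $c_\alpha \neq 0$ and every $i$, the monomial $m = c_\alpha x^\alpha$ satisfies $m_i = 0$; by the remark in the excerpt this is equivalent to $a_i \cdot c_\alpha = 0$ in $\mathbb{F}$ for all $i$.

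For part (i), if $\operatorname{char}\mathbb{F} = 0$, then $a_i \cdot c_\alpha = 0$ with $c_\alpha \neq 0$ forces $a_i = 0$ in $\mathbb{Z}$ (since $\mathbb{Z}$ embeds in $\mathbb{F}$), so every exponent vector with a nonzero coefficient is $0$; hence $f = c_0$ is constant. For part (ii), if $\operatorname{char}\mathbb{F} = p$, then $a_i \cdot c_\alpha = 0$ with $c_\alpha \neq 0$ forces $a_i \equiv 0 \pmod p$ for each $i$, i.e. $a_i = p b_i$ for some $b_i \in \mathbb{Z}_+$; relabelling $\beta_{b_1,\dots,b_n} := c_{pb_1, \dots, pb_n}$ gives exactly the claimed form $f = \sum \beta_{b_1,\dots,b_n} x_1^{pb_1}\cdots x_n^{pb_n}$.

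For part (iii), I start from the conclusion of (ii) and use that in characteristic $p$ the Frobenius map $a \mapsto a^p$ is a ring homomorphism, so $(u+v)^p = u^p + v^p$; iterating, any $\mathbb{F}$-linear combination of $p$-th powers of monomials is itself a $p$-th power provided each coefficient $\beta$ is a $p$-th power in $\mathbb{F}$. Since $\mathbb{F}$ is algebraically closed, every $\beta_{b_1,\dots,b_n}$ has a $p$-th root $\gamma_{b_1,\dots,b_n} \in \mathbb{F}$; setting $g = \sum \gamma_{b_1,\dots,b_n} x_1^{b_1}\cdots x_n^{b_n}$, the Frobenius property of the polynomial ring $\mathbb{F}[x_1,\dots,x_n]$ (which also has characteristic $p$) gives
\begin{displaymath}
g^p = \sum \gamma_{b_1,\dots,b_n}^{\,p}\, x_1^{p b_1}\cdots x_n^{p b_n} = \sum \beta_{b_1,\dots,b_n}\, x_1^{p b_1}\cdots x_n^{p b_n} = f,
\end{displaymath}
as required. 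The only genuinely non-routine point is justifying the multinomial Frobenius identity in the polynomial ring, i.e. that raising to the $p$-th power is additive there; this follows since $\mathbb{F}[x_1,\dots,x_n]$ is a commutative ring of characteristic $p$ and all the intermediate binomial coefficients $\binom{p}{k}$ for $0 < k < p$ are divisible by $p$, hence vanish. Everything else is bookkeeping with the monomial formula, so I anticipate no real obstacle — the main thing to be careful about is keeping the two roles of "characteristic $p$" (for $\mathbb{F}$ and for $\mathbb{F}[x_1,\dots,x_n]$) clearly separated and invoking algebraic closure only where it is actually needed, namely to extract $p$-th roots of the coefficients.
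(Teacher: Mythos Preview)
Your argument is correct and follows essentially the same route as the paper: expand $f$ in monomials, use $f_i=0$ to force $a_i\cdot c_\alpha=0$, deduce (i) and (ii) from the characteristic, and for (iii) take $p$-th roots of the coefficients (using algebraic closure) and apply the Frobenius/Freshman's-dream identity in the characteristic-$p$ polynomial ring to conclude $g^p=f$. The only cosmetic difference is that you phrase the last step via the Frobenius endomorphism while the paper says ``expanding binomially''; the content is identical.
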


\begin{proof} 
(i) Suppose
\begin{displaymath} 
f(x_1, \dots , x_n)=\sum_{a_i \in \mathbb{Z}_+}
c_{a_1, \dots ,a_n} x_1^{a_1}\cdots x_n^{a_n}.
\end{displaymath}
Since $\nabla{f}=0$, it follows that the polynomial 
\begin{displaymath}
f_i (x_1, \dots ,x_n)=\sum_{a_i \in \mathbb{Z}_+}
a_i \cdot c_{a_1, \dots ,a_n}
x_1^{a_1}\cdots  x_i^{a_i-1} \cdots x_n^{a_n}
\end{displaymath} 
is the zero polynomial, and thus $a_i \cdot c_{a_1, \dots ,a_n}=0$, for all $(a_1, \dots ,a_n)$,
from which we obtain
$ c_{a_1, \dots ,a_n} = 0$ for all $(a_1 , \dots , a_n) \neq (0, \dots, 0)$
since the characteristic of $\mathbb{F}$ is zero. Thus $f$ is a constant 
polynomial.

(ii) Arguing as in (i), with the same notation, we have that $a_i \cdot c_{a_1, \dots ,a_n}=0$ 
for all $(a_1, \dots ,a_n)$. Since the characteristic of $\mathbb{F}$ is $p$, this forces $c_{a_1, \dots ,a_n}$
to be zero unless each $a_i$ is a multiple of $p$.

(iii) Since $\mathbb{F}$ is algebraically closed, with $\beta_{a_1, \dots ,a_n}$ as in (ii), there exist
$\gamma_{a_1, \dots ,a_n} \in \mathbb{F}$ such that 
$$\gamma_{a_1, \dots ,a_n}^p = \beta_{a_1, \dots ,a_n}.$$ 
Let
$$g(x) = \sum_{a_i \in \mathbb{Z}_+} \gamma_{a_1, \dots ,a_n} x_1^{a_1} \dots x_n^{a_n}.$$
Then, expanding binomially and using the fact that the characteristic of $\mathbb{F}$ is $p$, we have
$$g(x)^p = \Bigg(\sum_{a_i \in \mathbb{Z}_+}
 \gamma_{a_1, \dots ,a_n} x_1^{a_1} \dots x_n^{a_n} \Bigg)^p
= \sum_{a_i \in \mathbb{Z}_+} \gamma_{a_1, \dots ,a_n}^p x_1^{p a_1} \dots x_n^{p a_n}$$
$$ = \sum_{a_i \in \mathbb{Z}_+} \beta_{a_1, \dots ,a_n} x_1^{p a_1} \dots x_n^{p a_n} = f(x).$$

\end{proof}

For $x=(x_1, \dots ,x_n)$ and $y=(y_1, \dots ,y_n)$ in $\mathbb{F}^n$, denote by $ \langle x, y \rangle $ 
the element $x_1y_1+ \cdots +x_ny_n$ of $\mathbb{F}$. 

\begin{lemma}\label{nabla} Let $\mathbb{F}$ be a field and $f \in \mathbb{F}[x_1,...,x_n]$. 
Let $l$ be the line $\{v+tb:t \in \mathbb{F}\}$ for some $v \in \mathbb{F}^n$ with direction 
$b \in \mathbb{F}^n \setminus \{0 \}$. If $f|_l (t):=f(v+tb)\in \mathbb{F}[t]$ 
is the restriction of $f$ to $l$, we have 
\begin{displaymath} (f|_l)'(t)= \langle b ,\nabla{f}(v+tb) \rangle.
\end{displaymath}
\end{lemma}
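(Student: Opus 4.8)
The plan is to unwind the definition of the one-variable formal derivative and to substitute the parametrisation of $l$ directly into the expansion of $f$ about a point that comes from the definition of the formal partial derivatives. Recall that for $g \in \mathbb{F}[t]$ the derivative $g'(t)$ is by definition the coefficient of $s$ in $g(t+s) \in \mathbb{F}[t,s]$, viewed as a polynomial in $s$ with coefficients in $\mathbb{F}[t]$. So I would begin by writing
\[ (f|_l)(t+s) = f\big(v + (t+s)b\big) = f\big((v+tb) + sb\big), \]
which is just the identity $v + (t+s)b = (v+tb) + sb$ in $\mathbb{F}^n$, and note that $(f|_l)'(t)$ is precisely the coefficient of $s$ in this expression.

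Next I would bring in the definition of the formal partial derivatives. Write $f(x+z) = \sum_{\alpha} P_\alpha(x)\, z^\alpha$ as a polynomial in the variables $z = (z_1,\dots,z_n)$ with coefficients $P_\alpha \in \mathbb{F}[x_1,\dots,x_n]$; by definition $P_0 = f$ and $P_{e_i} = f_i$ for each $i = 1,\dots,n$ (with $e_i$ the $i$-th standard basis multi-index), while every other $\alpha$ occurring has $|\alpha| \geq 2$. Substituting $z_i = s b_i$ turns each monomial $z^\alpha$ into $b^\alpha s^{|\alpha|}$, so
\[ f(x + sb) = \sum_{\alpha} P_\alpha(x)\, b^\alpha\, s^{|\alpha|}. \]
Reading off the coefficient of $s$ on the right: the term $\alpha = 0$ contributes nothing, the terms with $|\alpha| \geq 2$ contribute only to powers $s^k$ with $k \geq 2$, and the terms $\alpha = e_i$ together contribute $\sum_{i=1}^{n} b_i f_i(x) = \langle b, \nabla f(x) \rangle$. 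Taking $x = v + tb$ and comparing with the previous display yields the claim.

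The one point that requires a little care — which I would regard as the main, though minor, obstacle — is justifying cleanly that after the substitution $z_i = s b_i$ the higher-order terms in $z$ cannot contribute to the coefficient of $s^1$. This is immediate from the observation that a monomial $z^\alpha$ of total degree $|\alpha|$ becomes a scalar multiple of $s^{|\alpha|}$, but it is worth stating explicitly since in positive characteristic one cannot fall back on the usual analytic chain-rule heuristic. An alternative, perhaps slightly longer, route is to verify the identity first for a single monomial $f = x_1^{a_1}\cdots x_n^{a_n}$ using the multinomial theorem and then invoke the linearity of both sides in $f$ (the restriction map $f \mapsto f|_l$, the gradient, and the pairing $\langle b, \cdot\rangle$ are all linear).
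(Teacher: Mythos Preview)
Your proof is correct; the paper itself leaves this lemma as an easy exercise and gives no argument, so there is nothing to compare against. Your direct unwinding of the definition of the formal derivative via the expansion $f(x+z) = \sum_\alpha P_\alpha(x) z^\alpha$ and the substitution $z = sb$ is exactly the kind of argument the authors presumably had in mind.
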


We leave the proof as an easy exercise.

\section{Proof of Theorem \ref{carberyjoints}}

We are now ready to prove Theorem \ref{carberyjoints}. Following \cite{MR2594983}, 
the main tool is:

\begin{proposition}\label{main}
Let $\mathbb{F}$ be a field, let $\mathfrak{L}$ be a finite set of lines in $\mathbb{F}^n$ and suppose 
$K$ is some subset of the set of joints of $\mathfrak{L}$. Suppose that for each line $l \in \mathfrak{L}$ 
we have 
$$ | l \cap K | \geq m.$$
Then 
$$ |K| \gtrsim_n m^n.$$
\end{proposition}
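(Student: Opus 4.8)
The plan is to argue by contradiction using a polynomial of controlled degree that vanishes on $K$, exploiting the joint structure to force the polynomial to be identically zero. Suppose $|K| = N$. By Lemma~\ref{dvir} there is a non-zero polynomial $f \in \mathbb{F}[x_1,\dots,x_n]$ of degree $d \lesssim_n N^{1/n}$ vanishing at every point of $K$. We may assume $f$ has minimal degree among all non-zero polynomials vanishing on $K$; denote this minimal degree by $d$. The aim is to show $m \lesssim_n d$, which gives $m \lesssim_n N^{1/n}$, i.e. $|K| = N \gtrsim_n m^n$.

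The key step is to look at what happens along a line $l \in \mathfrak{L}$. Since $|l \cap K| \geq m$ and $f$ vanishes on $l \cap K$, the restriction $f|_l(t) = f(v+tb)$ is a polynomial in $\mathbb{F}[t]$ with at least $m$ distinct roots. So either $\deg(f|_l) \geq m$ — which is consistent with $\deg f \leq d$ only if $m \leq d$, and if this held for all lines we'd be done — or $f|_l \equiv 0$, meaning $f$ vanishes identically on $l$. Here is the subtlety forced by arbitrary characteristic: over $\mathbb{R}$ one would say that if $f|_l$ has more than $d$ roots it must be zero, but over a field of characteristic $p$ a polynomial like $t^p - t$ can have many roots without being zero; however, a polynomial in $\mathbb{F}[t]$ with $\geq m$ \emph{distinct} roots and degree $< m$ is genuinely the zero polynomial, so this reasoning is fine provided we track \emph{distinct} roots. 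Thus: if $m > d$, then $f$ vanishes identically on every line $l \in \mathfrak{L}$.

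Now suppose for contradiction that $m > Cd$ for a large dimensional constant $C$ (so in particular $f$ vanishes on every line of $\mathfrak{L}$, hence on all of $\bigcup \mathfrak{L}$). The plan is to show that then $\nabla f$ also vanishes on $K$, contradicting minimality of $d$ — unless $\nabla f \equiv 0$, which we handle separately. Fix a joint $x \in K$ and lines $l_1, \dots, l_n \in \mathfrak{L}$ through $x$ with directions $b_1, \dots, b_n$ spanning $\mathbb{F}^n$. Since $f$ vanishes identically on each $l_j$, the one-variable polynomial $f|_{l_j}$ is the zero polynomial, so $(f|_{l_j})' \equiv 0$; by Lemma~\ref{nabla}, $\langle b_j, \nabla f(x)\rangle = 0$ for each $j$. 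As $\{b_j\}$ span $\mathbb{F}^n$, this forces $\nabla f(x) = 0$, i.e. each $f_i(x) = 0$. Hence every $f_i$ vanishes on all of $K$. Since $\deg f_i \leq d - 1 < d$, minimality of $d$ forces each $f_i$ to be the zero polynomial, i.e. $\nabla f = 0$.

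To finish we must rule out $\nabla f \equiv 0$ while $f \not\equiv 0$ and $\deg f = d \geq 1$. In characteristic zero this is immediate from Lemma~\ref{carbery3}(i): $f$ would be constant, contradicting $d \geq 1$. In characteristic $p$, Lemma~\ref{carbery3}(ii) tells us $f(x) = \sum \beta_{a_1,\dots,a_n} x_1^{pa_1}\cdots x_n^{pa_n}$; I would then pass to the polynomial $\tilde f$ obtained by the (purely formal) substitution that replaces each $x_i^{p a_i}$ by $x_i^{a_i}$ — concretely, $\tilde f$ is the polynomial with the same coefficients $\beta_{a_1,\dots,a_n}$ on the monomials $x_1^{a_1}\cdots x_n^{a_n}$. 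Then $\tilde f$ is non-zero, has degree $d/p < d$, and vanishes on every point of $K$ whose coordinates are all $p$-th powers. This last point is where care is needed: the vanishing of $f$ at $x$ gives vanishing of $\tilde f$ at a point whose coordinates are $p$-th roots of those of $x$, which exist in $\overline{\mathbb{F}}$ but perhaps not in $\mathbb{F}$. The clean fix is to work throughout over the algebraically closed field $\overline{\mathbb{F}}$ from the outset (joints over $\mathbb{F}$ are still joints over $\overline{\mathbb{F}}$, as the spanning condition is unaffected by field extension), and use Lemma~\ref{carbery3}(iii) to write $f = g^p$ with $\deg g = d/p$; then $g$ vanishes wherever $f$ does, in particular on $K$, contradicting minimality of $d$. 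Either way we reach a contradiction, so $m \lesssim_n d \lesssim_n |K|^{1/n}$, which is the claim.

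\textbf{Main obstacle.} The one genuinely delicate point is the positive-characteristic case of the dichotomy "$\nabla f = 0 \Rightarrow f$ has smaller-degree structure": we cannot simply descend to $\nabla f$ and contradict minimality, because $\nabla f$ may vanish identically even though $f$ is non-constant. Resolving this cleanly forces us to base-change to $\overline{\mathbb{F}}$ and invoke Lemma~\ref{carbery3}(iii) to extract the $p$-th root $g$, whose existence as an honest polynomial over $\overline{\mathbb{F}}$ is exactly what makes the degree-reduction contradiction go through. The rest of the argument — the Dvir-type polynomial, the restriction-to-a-line root count (being careful to count \emph{distinct} roots, since that is what remains valid over any field), and the gradient-vanishing-at-joints computation via Lemma~\ref{nabla} — is robust and characteristic-free.
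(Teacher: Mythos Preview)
Your argument is correct and follows essentially the same route as the paper: take a minimal-degree non-zero polynomial vanishing on $K$ via Lemma~\ref{dvir}, use the root count on each line to force $f|_l \equiv 0$, deduce $\nabla f(x)=0$ at every joint via Lemma~\ref{nabla}, invoke minimality to get $\nabla f \equiv 0$, and finish with Lemma~\ref{carbery3}. The only cosmetic differences are that the paper passes to the algebraic closure $\overline{\mathbb{F}}$ at the very start (rather than discovering mid-proof that this is the ``clean fix''), and that your constant $C$ in ``$m > Cd$'' is unnecessary --- the bare inequality $m > d$ already suffices for the contradiction, with the implicit constant in $|K|\gtrsim_n m^n$ coming solely from Lemma~\ref{dvir}.
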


\begin{proof}
In the first place we may assume without loss of generality that $\mathbb{F}$ is algebraically closed.
For if $\mathfrak{L}$ is a finite set of lines in $\mathbb{F}^n$ we can form, in the obvious way, the set of lines 
$\overline{\mathfrak{L}}$ in $\overline{\mathbb{F}^n}$ where $\overline{\mathbb{F}}$ is the algebraic closure
of $\mathbb{F}$. We need to check that every joint of $\mathfrak{L}$ in $\mathbb{F}^n$ is also a joint of
$\overline{\mathfrak{L}}$. Indeed, we notice that if a set of $n$ vectors is linearly independent in
$\mathbb{F}^n$, it remains linearly independent in $\mathbb{E}^n$ where $\mathbb{E}$ is any extension field
of $\mathbb{F}$. (This is because linear independence of a set of $n$
vectors in $\mathbb{F}^n$ is characterised by nonvanishing of the
determinant of the matrix whose columns
are these vectors; this property remains unchanged upon passing to extensions.)


So from now on we assume that $\mathbb{F}$ is algebraically closed.

Suppose for a contradiction that we have $ |K| < C_n m^n$ for a suitable $C_n$. Then there will, by Lemma 
\ref{dvir}, be a non-zero $f \in \mathbb{F}[x_1, \dots, x_n]$ of minimal degree {\em strictly smaller than} $m$, 
which vanishes on $K$.  

Take $l \in \mathfrak{L}$ and consider $f|_l \in \mathbb{F}[t]$. This
has degree strictly smaller than $m$ 
but vanishes at at least $m$ points. Hence it is the zero polynomial, and so is its derivative, which, by Lemma
\ref{nabla}, is $\langle \omega_l ,\nabla{f}(v+t \omega_l) \rangle$ where $\omega_l$ is the direction of $l$.
So $\langle \omega_l ,\nabla{f}(x) \rangle = 0$ for all $x \in l$.

If $x \in K$, there exist $l_1, \dots , l_n \in \mathfrak{L}$ with $x \in l_j$ and 
$\omega_{l_1}, \dots, \omega_{l_n}$ spanning $\mathbb{F}^n$. Hence
$$ \nabla{f}(x) = 0  \; \; \mbox{  for all   } \; \; x \in K. $$

So each component of $\nabla{f}$ vanishes on $K$, and since by definition $f$ was the non-zero polynomial of 
{\em smallest} degree vanishing on $K$ we must have that each component of $\nabla{f}$ is the zero polynomial. 
Hence
$$ \nabla{f} = 0.$$ 

We now use Lemma \ref{carbery3}. If the characteristic of $\mathbb{F}$ is zero, $f$ is a constant. However, 
$f$ vanishes on $K$, and thus $f$ is the zero polynomial which is a contradiction. If the characteristic of 
$\mathbb{F}$ is $p \geq 2$, then $f = g^p$ for some $g \in \mathbb{F}[x_1, \dots, x_n]$ which shares the 
same zero set as $f$, and in particular vanishes on $K$. Now unless $f$ is constant, the degree of $g$ 
will be strictly smaller than the degree of $f$, contradicting the definition of $f$. So $f$ is constant, hence 
zero as it vanishes on $K$, which is once again a contradiction.
\end{proof}

The contrapositive of Proposition \ref{main} is:

\begin{corollary}\label{cor}
Suppose that $\mathfrak{L}$ is a finite collection of lines in $\mathbb{F}^n$ and that $K$ is a subset of the set 
of joints of $\mathfrak{L}$. Then there exists a line $l \in \mathfrak{L}$
such that 
$$ |l \cap K|  \lesssim_n |K|^{1/n}.$$
\end{corollary}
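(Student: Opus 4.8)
The plan is to derive Corollary \ref{cor} directly from Proposition \ref{main} by a straightforward contrapositive argument, so essentially no new ideas are required. I would set $m$ to be the minimal value of $|l \cap K|$ over all lines $l \in \mathfrak{L}$, so that by construction $|l \cap K| \geq m$ for every $l \in \mathfrak{L}$; this is precisely the hypothesis of Proposition \ref{main} applied to this collection $\mathfrak{L}$ and this subset $K$ of its joints. Invoking the proposition then gives $|K| \gtrsim_n m^n$, or equivalently $m \lesssim_n |K|^{1/n}$.

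Having obtained $m \lesssim_n |K|^{1/n}$, I would simply recall that $m$ was realised as $|l_0 \cap K|$ for some particular line $l_0 \in \mathfrak{L}$ (the minimum over a finite nonempty set is attained), so that this $l_0$ satisfies $|l_0 \cap K| = m \lesssim_n |K|^{1/n}$, which is the desired conclusion. One minor point to address is the degenerate case $\mathfrak{L} = \emptyset$: then $K$, being a subset of the set of joints of $\mathfrak{L}$, is also empty, and the statement is vacuously true (there is no line to exhibit, but equally there is nothing to prove since the claim ``there exists a line such that\dots'' need only be checked when $\mathfrak{L}$ is nonempty, and in any case $K$ being empty makes the bound trivial). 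Assuming $\mathfrak{L} \neq \emptyset$ the argument above goes through verbatim.

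I do not anticipate any real obstacle here: the only thing to be careful about is bookkeeping with the implied constants, namely that the $C_n$ from the $\gtrsim_n$ in Proposition \ref{main} becomes a (different) $C_n$ in the $\lesssim_n$ of the corollary after taking $n$-th roots, which is legitimate since raising to the power $1/n$ is monotone and the constants depend only on $n$. This is exactly the kind of step the paper's notational conventions are designed to absorb.
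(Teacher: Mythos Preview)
Your proposal is correct and matches the paper's approach exactly: the paper simply declares Corollary~\ref{cor} to be the contrapositive of Proposition~\ref{main}, and your argument (taking $m = \min_{l \in \mathfrak{L}} |l \cap K|$ and applying the proposition) is precisely that contrapositive written out. The handling of the degenerate case and the constants is fine.
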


The remainder of the proof of Theorem \ref{carberyjoints} is the same as in the solution of the joints problem 
in $\R^n$ by Quilodr\'an (see \cite{MR2594983}).

\begin{corollary}\label{cor2}
Suppose that $\mathfrak{L}$ is a finite collection of $L$ lines in $\mathbb{F}^n$ and that $K$ is a subset 
of the set of joints of $\mathfrak{L}$. Then $K$ can be partitioned into at most $L$ sets, each of cardinality
at most $C_n |K|^{1/n}$.
\end{corollary}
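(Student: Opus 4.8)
The plan is to build the partition greedily by iterating Corollary \ref{cor}. At each stage I maintain a subset $K' \subseteq K$ of "not yet removed" joints and a sublist of lines still available; I apply Corollary \ref{cor} to the current $K'$ and the full collection $\mathfrak{L}$ to locate a line $l$ with $|l \cap K'| \lesssim_n |K'|^{1/n} \leq C_n |K|^{1/n}$, peel off the points of $l \cap K'$ as one block of the partition, remove $l$ from the available lines, and repeat with $K'$ replaced by $K' \setminus l$.

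The key points to check are, first, that this process halts after at most $L$ steps: each step consumes one line from $\mathfrak{L}$, so after at most $L$ iterations no lines remain; but at that stage $K'$ must be empty, since any remaining joint lies on $n$ lines of $\mathfrak{L}$ spanning $\mathbb{F}^n$ and in particular on at least one line — contradiction once every line has been used. (More carefully: so long as $K' \neq \emptyset$, every point of $K'$ is a joint of $\mathfrak{L}$, hence lies on some line of $\mathfrak{L}$, and Corollary \ref{cor} produces a line with $|l \cap K'| \geq 1$; thus each nonterminal step removes a nonempty block, and there are at most $L$ such steps because the line it returns, while not literally forced to be fresh, can be taken to be one not previously selected — see below.) Second, each block has size at most $C_n |K'|^{1/n} \leq C_n |K|^{1/n}$ since $K' \subseteq K$ throughout, which is exactly the claimed bound. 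Third, the blocks are pairwise disjoint and exhaust $K$ by construction, so we genuinely have a partition (possibly with the convention that empty blocks are discarded, leaving at most $L$ nonempty ones).

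The one genuine subtlety — and the main obstacle — is ensuring we use a \emph{new} line at each step so that the count "$\leq L$ blocks" is correct, since Corollary \ref{cor} as stated might return a line already used. The clean fix is to apply the corollary not to all of $\mathfrak{L}$ but to the sublist $\mathfrak{L}'$ of lines not yet chosen: this still works because every point of the current $K'$ is a joint of $\mathfrak{L}$, and one checks that once a line $l$ has contributed its block $l \cap K'$, the remaining set $K' \setminus l$ still consists of joints \emph{all of whose spanning lines lie among the unused ones} — indeed any $x \in K' \setminus l$ sits on $n$ spanning lines of $\mathfrak{L}$, and if one of them were the just-removed $l$ then $x \in l$, contradiction; so $x$ is spanned by $n$ lines in $\mathfrak{L}'$, hence is a joint of $\mathfrak{L}'$ and Corollary \ref{cor} applies to $(\mathfrak{L}', K' \setminus l)$. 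Wait — that argument only removes one offending line at a time, but after several steps several lines are gone, and a joint could have used \emph{two} of them; the correct statement is simply that a point surviving in $K'$ after several deletions has, by the same reasoning applied repeatedly, all $n$ of its spanning lines still present, because at the moment each was deleted the point was already absent from the surviving set or the point lies on that line (again a contradiction with survival). This bookkeeping is routine but is the thing one must get right; everything else is a direct iteration of Corollary \ref{cor}.
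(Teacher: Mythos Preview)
Your proposal is correct and follows essentially the same greedy line-removal argument as the paper's proof. The worry in your final paragraph is unnecessary: since the surviving set is $K' = K \setminus (l_1 \cup \cdots \cup l_m)$, any $x \in K'$ lies on none of the removed lines, so every line of $\mathfrak{L}$ through $x$ automatically remains in $\mathfrak{L}'$, and $x$ is a joint of $\mathfrak{L}'$ in one stroke --- no inductive bookkeeping is needed.
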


\begin{proof}
Set $K_0 = K$. By Corollary \ref{cor}, there is a line $l_1 \in \mathfrak{L}$ with at 
most $C_n |K|^{1/n}$ members of $K$ on it. Let $\mathfrak{L}_1 = \mathfrak{L} \setminus \{l_1 \}$ 
let $K_1 = K \cap l_1^c$. Note that $K_1$ is a subset of the joints of $\mathfrak{L}_1$.

By  Corollary \ref{cor} once more, there is a line $l_2 \in \mathfrak{L}_1$ 
with at most $C_n |K_1|^{1/n} \leq C_n |K|^{1/n} $ members of $K_1$ on it.
Let $\mathfrak{L}_2 = \mathfrak{L}_1 \setminus \{l_2\}$ and let $K_2 = K_1 \cap l_2^c$. 
Note that $K_2$ is a subset of the joints of $\mathfrak{L}_2$.

We continue in this way, and once there are fewer than $n$ lines remaining, there are no joints remaining, 
and the process stops. This happens in at most $L$ steps. Now $K$ is the disjoint union of the at most 
$L$ sets $l_j \cap K_{j-1}$, each of which has cardinality at most $C_n |K|^{1/n}$.

\end{proof}

The proof of Theorem \ref{carberyjoints} is now immediate as Corollary \ref{cor2} shows that
\begin{displaymath}|J| \lesssim_n L \cdot |J|^{1/n},
\end{displaymath}
which gives
\begin{displaymath}|J| \lesssim_n L^{\frac{n}{n-1}}
\end{displaymath}
upon rearranging.


\section{An alternative argument for Theorem \ref{generic}}\label{alt}


While the proof sketched above for Theorem \ref{generic} is very straightforward, the alternative argument which follows 
is perhaps more instructive. To set some notation, if $\mathfrak{L}$ is a 
set of lines in $\mathbb{F}^n$ let 
$$ N_\mathfrak{L}(x) 
= | \{(l_1, \dots , l_n) \in \mathfrak{L}^n \, : \, l_1, \dots , l_n \mbox{  form a joint at  } x\}|.$$
and for $\lambda \geq 1 $ let 
$$J_\lambda(\mathfrak{L}) = \{ x \in \mathbb{F}^n \; : \; N_\mathfrak{L}(x) \geq \lambda \} \mbox{  and  }
J(\mathfrak{L}) = J_1 (\mathfrak{L}).$$

\begin{proposition}\label{choosing}
Let $\mathfrak{L}$ be a generic finite collection of lines in $\mathbb{F}^n$. Then for each 
$x \in J_\lambda(\mathfrak{L})$ we can choose $\sim \lambda^{1/n}$ lines from $\mathfrak{L}$, each 
containing $x$, such that for each line $l$, the number of $x \in J_\lambda(\mathfrak{L})$ choosing 
$l$ is $\lesssim_n |J_\lambda(\mathfrak{L})|^{1/n}$.
\end{proposition}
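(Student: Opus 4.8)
The plan is to mimic the greedy-selection mechanism already used in Corollaries~\ref{cor} and~\ref{cor2}, but now run it simultaneously for all levels/copies so that each $x \in J_\lambda(\mathfrak{L})$ gets assigned not one but $\sim \lambda^{1/n}$ incident lines, while keeping the per-line load under control. First I would observe that since $\mathfrak{L}$ is generic, each $x \in J_\lambda(\mathfrak{L})$ has $N_\mathfrak{L}(x) \geq \lambda$, and genericity forces $N_\mathfrak{L}(x)$ to be exactly $r(r-1)\cdots(r-n+1)$ where $r = r(x)$ is the number of lines of $\mathfrak{L}$ through $x$ (any $n$ distinct concurrent lines form a joint, so every ordered $n$-tuple of distinct lines through $x$ counts, and no tuple with a repeated line counts). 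Hence $N_\mathfrak{L}(x) \geq \lambda$ is equivalent to $r(x) \gtrsim_n \lambda^{1/n}$; concretely there is a constant so that $r(x) \geq c_n \lambda^{1/n}$. So the task reduces to: assign to each $x \in J_\lambda(\mathfrak{L})$ a set $S(x)$ of $\lceil c_n \lambda^{1/n}\rceil$ lines through $x$, so that every line $l$ is chosen by $\lesssim_n |J_\lambda(\mathfrak{L})|^{1/n}$ points.

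Next I would set up the greedy deletion. Write $K = J_\lambda(\mathfrak{L})$ and let $k = \lceil c_n \lambda^{1/n} \rceil$. Apply Corollary~\ref{cor} to $(\mathfrak{L}, K)$ to find a line $l_1$ with $|l_1 \cap K| \lesssim_n |K|^{1/n}$; declare that every $x \in l_1 \cap K$ ``chooses'' $l_1$, then remove $l_1$ from $\mathfrak{L}$. The subtlety is that removing $l_1$ may destroy the property that the remaining points are joints of the remaining lines, or may drop $r(x)$ below the threshold. To handle this, I would only ever delete a line and pass to the residual configuration on the set $K' \subseteq K$ of those points that still have at least $n$ incident lines with spanning directions among the survivors — exactly as in the proof of Corollary~\ref{cor2}, $K'$ remains a subset of the joints of the surviving lines, so Corollary~\ref{cor} keeps applying. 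Each point $x$ accumulates choices as the lines through it are successively deleted; I stop assigning choices to $x$ once it has accumulated $k$ of them (equivalently, once $r(x)$ among the survivors has dropped from its initial value down by $k$, it has chosen $k$ lines). Because $x$ started with $r(x) \geq k$ incident lines, it is guaranteed to reach $k$ choices before it runs out, so $|S(x)| = k \sim \lambda^{1/n}$ for every $x \in K$. And the per-line bound is immediate from the construction: when $l_j$ is deleted, the number of points choosing it is $|l_j \cap K_{j-1}| \lesssim_n |K_{j-1}|^{1/n} \leq |K|^{1/n} = |J_\lambda(\mathfrak{L})|^{1/n}$.

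The main obstacle I anticipate is the bookkeeping that ensures the process is well-defined and terminates with every point fully served: I must check that ``$x$ has accumulated fewer than $k$ choices'' is preserved exactly as long as ``$x$ is still a joint of the survivors,'' i.e. that the count of deleted-lines-through-$x$ equals the deficit $r_{\text{initial}}(x) - r_{\text{current}}(x)$, and that a point with $r_{\text{current}}(x) \geq n$ and spanning survivors is genuinely still a joint (here genericity is again what rescues us — any $n$ concurrent survivors whose directions span suffice, and in the generic setting whenever $n$ lines meet they span, so it is enough to track the count $r_{\text{current}}(x) \geq n$, not the spanning condition separately). Once this is in place the argument is just Corollary~\ref{cor} iterated, and the stated bounds $|S(x)| \sim \lambda^{1/n}$ and $|\{x : l \in S(x)\}| \lesssim_n |J_\lambda(\mathfrak{L})|^{1/n}$ drop out with no further computation.
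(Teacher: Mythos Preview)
Your proposal is correct and follows essentially the same greedy-deletion approach as the paper: iteratively remove a lightly-loaded line via Corollary~\ref{cor}, declare that each point of $J_\lambda(\mathfrak{L})$ still a joint for the survivors chooses it, and invoke genericity to convert ``$x$ is no longer a joint'' into ``at most $n-1$ surviving lines through $x$,'' which forces $\gtrsim_n \lambda^{1/n}$ choices. The only wrinkle is that your fixed target $k=\lceil c_n\lambda^{1/n}\rceil$ may overshoot by $n-1$ (since $x$ leaves $K'$ once $r_{\text{current}}(x)<n$, having chosen only $r(x)-(n-1)$ lines), but this is harmless for the $\sim\lambda^{1/n}$ conclusion---the paper simply does not cap and records $r\geq r(x)-(n-1)\gtrsim_n\lambda^{1/n}$ directly---and you correctly flag this bookkeeping as the point needing care.
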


\begin{proof}
We may assume that $\lambda \gg  n^n$.

By Corollary \ref{cor}, there exists an $l_1 \in \mathfrak{L}$ such that
$$| l_1 \cap J_\lambda (\mathfrak{L})| \lesssim_n |J_\lambda(\mathfrak{L})|^{1/n}.$$
Let $\mathfrak{L}_1 = \mathfrak{L} \setminus\{l_1\}$. Then there exists an $l_2 \in \mathfrak{L}_1$ such that
$$| l_2 \cap J_\lambda (\mathfrak{L}) \cap J(\mathfrak{L}_1)| 
\lesssim_n |J_\lambda(\mathfrak{L}) \cap J(\mathfrak{L}_1)|^{1/n} \leq |J_\lambda(\mathfrak{L})|^{1/n}.$$
Let $\mathfrak{L}_2 = \mathfrak{L}_1 \setminus\{l_2\}$ and continue in this way to obtain $l_m \in \mathfrak{L}_{m-1}$
such that
$$| l_m \cap J_\lambda (\mathfrak{L}) \cap J(\mathfrak{L}_{m-1})| 
\lesssim_n |J_\lambda(\mathfrak{L}) \cap J(\mathfrak{L}_{m-1})|^{1/n} \leq |J_\lambda(\mathfrak{L})|^{1/n},$$
and then define $\mathfrak{L}_m = \mathfrak{L}_{m-1} \setminus\{l_m\}$.
(The process stops when we have arrived at some last nonempty 
$J(\mathfrak{L}_{m_\ast -1})$ and chosen some last $l_{m_\ast}$ such
that $J(\mathfrak{L}_{m_\ast}) = \emptyset$, and in any case before we reach $|\mathfrak{L}| - n + 1$ 
steps.)

We say that $x \in J_\lambda(\mathfrak{L})$ chooses $l_m$ iff $ x \in l_m \cap J_\lambda (\mathfrak{L}) 
\cap J(\mathfrak{L}_{m-1})$. By construction the number of $x$ choosing $l_m$ 
is $\lesssim_n |J_\lambda(\mathfrak{L})|^{1/n}$, and every member of
$J_\lambda(\mathfrak{L})$ chooses some $l_m$. (Indeed, if
  $x \in J_\lambda (\mathfrak{L})$ does not choose $l_1$, we have $x
  \notin l_1$ and hence $x \in J(\mathfrak{L}_1)$. 
If now $x$ does not choose $l_2$, we have $x \notin l_2$ and hence $x \in J(\mathfrak{L}_2)$. 
Continuing in this way we get that if $x \in J_\lambda (\mathfrak{L})$ does not choose $l_m$, for all $m \in \{1,...,m_\ast\}$ then $x
\notin l_1 \cup \dots \cup l_{m_\ast}$. So $x \in J(\mathfrak{L}_{m_{\ast}})$, which is a contradiction to the emptiness
of this set.)

We still need to show that each $x \in J_\lambda(\mathfrak{L})$ chooses $\gtrsim_n \lambda^{1/n}$ lines $l_m$. 
Suppose that $x$ chooses $l_{m_1}, \dots ,l_{m_r}$ (with $ r \geq 1$ and $m_1 < m_2 < \dots < m_r$), and 
no other lines $l_m$. Then $x$ is {\em not} a joint of $\mathfrak{L}_{m_r} =
\{l_{m_r +1}, l_{m_r + 2}, \dots \}$. 
(Indeed, if $x \in J(\mathfrak{L}_{m_r})$, then, since it
  does not choose $l_{m_r +1}$, it follows that $x \notin l_{m_r +1}$ and
  hence $x \in J(\mathfrak{L}_{m_r + 1})$. Since $x$
does not choose $l_{m_r +2}$, we have that $x \notin l_{m_r +2}$ and
  hence $x \in J(\mathfrak{L}_{m_r + 2})$. Continuing in this way
  we again contradict the emptiness of $J(\mathfrak{L}_{m_{\ast}})$.)
So $x$ belongs to at most $n-1$ members of $\mathfrak{L}_{m_r}$, by the genericity hypothesis. 
Since $x$ belongs to $\gtrsim_n \lambda^{1/n}$ members 
of $\mathfrak{L}$, it must be that it belongs to $\gtrsim_n
\lambda^{1/n} - n \sim \lambda^{1/n}$ 
members of 
 $\mathfrak{L}\setminus \mathfrak{L}_{m_r} = \{l_1, \dots, l_{m_r}\}$. 
Hence $r \gtrsim_n \lambda^{1/n}$. 

\end{proof}

With Proposition \ref{choosing} in hand, the proof of Theorem \ref{generic} is immediate: let $c(x,l) = 1$ if $x$ chooses $l$ and 
$c(x,l) = 0$ otherwise. Then 
$$ \sum_{l\in \mathfrak{L}} \sum_{x \in J_{\lambda} (\mathfrak{L}) } c(x,l) \lesssim_n |\mathfrak{L}| |J_\lambda(\mathfrak{L})|^{1/n}$$
and 
$$ \sum_{x\in J_{\lambda} (\mathfrak{L})} \sum_{l \in \mathfrak{L}} c(x,l) \gtrsim_n |J_\lambda(\mathfrak{L})| \lambda^{1/n}.$$
So  
$$|J_\lambda(\mathfrak{L})| \lambda^{1/n} \lesssim_n |\mathfrak{L}| |J_\lambda(\mathfrak{L})|^{1/n},$$
which upon rearrangement gives 
$$|J_\lambda(\mathfrak{L})| \lesssim_n |\mathfrak{L}|^{n/(n-1)}/\lambda^{1/(n-1)}$$
as required.

\subsection{Remarks on the genericity hypothesis}\label{nongen}

In the rest of this note we discuss the hypothesis of genericity in the statements of Theorem \ref{generic} and 
Proposition \ref{choosing}. It is certainly needed for the proofs we have given, but one might hope that the results 
remain true without it.

To see that genericity is needed for the algorithm in the proof of Proposition \ref{choosing} to work,  
let us consider $\mathbb{R}^3$, and let $\mathfrak{L}$ consist of the $x_3$-axis together 
with $M \gg 1 $ lines with distinct directions 
in the plane $x_3 = 0$. Then $\mathfrak{L}$ is non-generic,
$0$ is the only joint of $\mathfrak{L}$ and $N(0) \sim M^2$. Set $\lambda = M^2$.   
We can choose $l_1$ to be the $x_3$-axis; then $J(\mathfrak{L}_1) = \emptyset$ and no further lines are chosen. So
$0$ is not in $\gtrsim \lambda^{1/3} = M^{2/3}$ lines chosen by the procedure. On the other hand, if we avoid choosing 
the $x_3$-axis at any step, the algorithm does work. 

One may wonder, therefore, if, for any configuration of lines and joints, there is always an appropriate 
{\em choice} of line at each step of the algorithm, which ultimately implies the conclusion of Proposition \ref{choosing}. 
The answer is no: in other words, Proposition \ref{choosing}, as stated (i.e. for \textit{all} joints in 
$J_{\lambda}(\mathfrak{L})$, with the notation of the Proposition), cannot in general be deduced by an appropriate 
application of our algorithm in the non-generic case. This is demonstrated by the following example.

Let $\mathbb{F}$ be a finite field of cardinality $p$. In $\mathbb{F}^3$, we consider the set $\mathfrak{L}$ of all lines 
in the horizontal plane $\{x_3=0\}$, together with a vertical line through each point of that plane. The set 
$J$ of joints formed by $\mathfrak{L}$ is the plane $\{x_3=0\}$ (so $|J|=p^2$), while each joint in $J$ has 
multiplicity $\sim p^2$. The collection $\mathfrak{L}$ is non-generic.

Now, the first step of our algorithm requires the removal of a line in $\mathfrak{L}$ containing $\lesssim |J|^{1/3}$, 
i.e. $\lesssim p^{2/3}$, joints. Since each horizontal line of $\mathfrak{L}$ contains $p$ joints, that line has to 
be a vertical one, after the removal of which the joint $x$ it contains is not a joint any more for the remaining 
collection of lines. Therefore, $x$ will only choose one line of $\mathfrak{L}$ via our algorithm, and not 
$\gtrsim N(x)^{1/3}\sim p^{2/3}$, which is what the conclusion of Proposition \ref{choosing} would require.

Nevertheless, the conclusion of Proposition \ref{choosing} {\em does} hold for 
this example. Indeed, for each 
$x \in J$ (each of which has multiplicity $\sim p^2$), it is possible to choose $\sim p^{2/3}$ lines from 
$\mathfrak{L}$, each containing $x$, such that, for each $l \in \mathfrak{L}$, the number of 
$x \in J$ choosing $l$ is $\lesssim |J|^{1/3} \sim p^{2/3}$.

To see this, let $1 \ll k \ll p$, and partition $\mathbb{F}$ into sets $S_1$, \dots , $S_k$, each with
 $\sim p/k = m$ members. For $j=1, \dots ,k$, let each point $(x_0,y_0)\in \mathbb{F}\times S_j$ choose 
all the lines through it with ``slopes" in $S_j$ (i.e. all the lines of the form $\{(x,y) \, : \, y-y_0=b(x-x_0)\}$, 
where $b \in S_j$). Thus, each point is choosing $m$ lines through it. Now, each 
line $l \in \mathfrak{L}$ must have slope in $S_j$ for some $j \in \{1, \dots ,k\}$ and thus the points $(x,y)$ 
choosing it are the ones that are on the line and have $y \in S_j$. There are clearly $\sim m$ such points, 
except when the slope of the line is $0$, in which case up to $p$ joints may be choosing it.

This means that, for $1 \ll k \ll p$, each point $x \in J$ can choose $\sim p/k$ lines through 
it from $\mathfrak{L}$ (the ones described above, except the ones of slope 0), such that, for each 
$l \in \mathfrak{L}$, the number of $x \in J$ choosing $l$ is $\sim p/k$. By setting $k\sim p^{2/3}$, 
it follows that the conclusion of Proposition \ref{choosing} holds for this example. 

It is possible to consider alternative, weaker versions of Proposition \ref{choosing} (in the non-generic setting) 
which would have the same consequences for Theorem \ref{generic}. Indeed, it is clear that Theorem \ref{generic} in the 
non-generic case could be deduced from a non-generic version of Proposition \ref{choosing} in which the conclusion holds 
not necessarily for \textit{all} joints in $J_{\lambda}(\mathfrak{L})$ (with the notation of the Proposition), 
but for \textit{a large proportion} of $J_{\lambda}(\mathfrak{L})$, i.e. 
for $\gtrsim |J_{\lambda}(\mathfrak{L})|$ elements of $J_{\lambda}(\mathfrak{L})$. 



\addcontentsline{toc}{chapter}{Bibliography}

\end{document}